\documentclass{amsart}
\usepackage{amsmath,amsfonts,euscript,amscd,amsthm,amssymb,upref,graphics}
\usepackage{mathrsfs}

\usepackage[all]{xy}

\setlength{\oddsidemargin}{0.25in}

\setlength{\textwidth}{6in}

\setlength{\topmargin}{-0.25in}

\setlength{\headheight}{0.3in}

\setlength{\headsep}{0.2in}

\setlength{\textheight}{8.5in}

\setlength{\footskip}{0.5in}

\usepackage{latexsym}
\usepackage{amsfonts,amssymb} 
\usepackage{graphicx}
\usepackage{tikz}
\usepackage{caption}
\usepackage{subcaption}
\usepackage{enumitem}

\usepackage{mathrsfs}

\newcommand{\field}[1]{\mathbb{#1}}
\newcommand{\A}{\field{A}}
\newcommand{\C}{\field{C}}

\newcommand{\N}{\field{N}}

\newcommand{\Z}{\field{Z}}

\theoremstyle{plain}

\newtheorem{theorem}{Theorem}[section]
\newtheorem{proposition}[theorem]{Proposition}
\newtheorem{lemma}[theorem]{Lemma}
\newtheorem{corollary}[theorem]{Corollary}
\newtheorem{definition}[theorem]{Definition}

\theoremstyle{definition}

\theoremstyle{remark}

\begin{document}

\makeatletter	   
\makeatother     

\title[Smooth Factorial affine surfaces of Log-Kodaira dimension 0 with trivial units]{Smooth Factorial affine surfaces of Logarithmic Kodaira dimension zero with trivial units}
\author{Gene Freudenburg \and Hideo Kojima \and Takanori Nagamine}
\date{\today} 
\subjclass[2010]{14R05, 14J26} 
\keywords{factorial surface, affine surface, logarithmic Kodaira dimension}
\thanks{The work of the second author was supported by JSPS KAKENHI Grant Number JP17K05198}
\thanks{The work of the third author was supported by a Grant-in-Aid for JSPS Fellows  (No. 18J10420) from the Japan Society for the Promotion of Science}

\begin{abstract} This paper considers the family $\mathscr{S}_0$ of smooth affine factorial surfaces of logarithmic Kodaira dimension 0 with trivial units over an algebraically closed field $k$. 
Our main result ({\it Theorem\,\ref{main}}) is that the number of isomorphism classes represented in $\mathscr{S}_0$ is at least countably infinite. This contradicts the earlier classification of Gurjar and Miyanishi \cite{Gurjar.Miyanishi.88} which asserted that $\mathscr{S}_0$ has at most two elements up to isomorphism when $k=\C$. Thus, the classification of surfaces in $\mathscr{S}_0$ for the field $\C$, long thought to have been settled, is an open problem. 
\end{abstract}

\maketitle


\section{Introduction} 

Let $k$ be an algebraically closed field $k$, and let $\mathscr{S}$ denote the family of smooth affine factorial surfaces with trivial units over $k$. Then $\mathscr{S}$ is partitioned according to logarithmic Kodaira dimension:
\[
\mathscr{S}=\mathscr{S}_2\cup\mathscr{S}_1\cup\mathscr{S}_0\cup\mathscr{S}_{-\infty}
\]
where $S\in\mathscr{S}_d$ means $\bar{\kappa}(S)=d$. Surfaces in $\mathscr{S}_2$ are said to be of general type. 
It is known that $\mathscr{S}_{-\infty}$ consists of one element (up to isomorphism), namely, the affine plane $\A^2_k$; see \cite{Miyanishi.01}. 

In their 1988 paper \cite{Gurjar.Miyanishi.88}, Gurjar and Miyanishi studied elements of $\mathscr{S}$ of non-general type in the case $k=\C$. 
For $\mathscr{S}_0$, they claim that
``there are only two types of surfaces in this case'' (p.99). Specifically, Theorem 2 asserts that any element of $\mathscr{S}_0$ is isomorphic to one of the surfaces $S$ or $S'$, defined as follows.
Let $x,y$ be a system of coordinate functions in the complex plane $\C^2$. 
\begin{itemize}
\item $S$ is obtained by blowing up $\C^2$ at the points $(0,1)$ and $(1,0)$, and removing the proper transform of the degenerate conic defined by $xy=0$.
\item $S'$ is obtained by blowing up $\C^2$ the point $(1,1)$, and removing the proper transform of the smooth conic $xy=1$.
\end{itemize}
In this paper, we will show the following.
\begin{enumerate}
\item $S\cong_{\C}S'$ \, ({\it Theorem\,\ref{Takanori}})
\item For any algebraically closed field $k$, the number of isomorphism classes represented in $\mathscr{S}_0$ is at least countably infinite.
\end{enumerate}
In fact, even the number of {\it stable} isomorphism classes represented in $\mathscr{S}_0$ is at least countably infinite (see {\it Theorem\,\ref{summary}}). 
Thus, the classification of surfaces in $\mathscr{S}_0$ for the field $\C$, long thought to have been settled, is an open problem. 

The surface $S'$ is the affine modification of the plane along the curve $xy=1$ with center $(1,1)$; see \cite{Kaliman.Zaidenberg.99}. 
For any algebraically closed field $k$, the curve $xy=1$ is isomorphic to the affine line over $k$ with one point removed, denoted $\A^1_*$. 
But this is just one of many ways to embed $\A^1_*$ in $\A^2_k$, and each of these embeddings, together with a point on the curve, yields an affine modification $X$ of $\A^2_k$. 
{\it Theorem\,\ref{lemma1}} gives conditions on the embedding which imply that $X\in\mathscr{S}$, and {\it Theorem\,\ref{Kodaira}} gives further conditions which imply $X\in\mathscr{S}_0$. 

We consider the family of embeddings $x^ny=1$, $n\ge 1$.
Specifically, let $V_n$ be the affine modification of $\A^2_k$ along the curve $\Gamma_n$ defined by $x^ny=1$ with center $(1,1)$. 
We show that $V_n\in\mathscr{S}_0$ for each $n\ge 1$, and that $V_m\not\cong V_n$ if $m\ne n$, and more generally, for cylinders over these surfaces, $V_m\times\A^d_k\not\cong V_n\times\A^d_k$ if $m\ne n$ and $d\ge 0$; see {\it Theorem\,\ref{summary}}. 

For each $n$,  we have $\Gamma_n\cong\A^1_*$, but it is not hard to show that the complements $\A^2_k\setminus\Gamma_m$ and $\A^2_k\setminus\Gamma_n$ are not isomorphic when $m\ne n$. Consequently, there is no automorphism of $\A^2_k$ transforming $\Gamma_m$ to $\Gamma_n$ when $m\ne n$. 
For the field $k=\C$, the closed embeddings of $\C^*$ in $\C^2$ have been classified (see 
\cite{Cassou-Nogues.Koras.Russell.09}), thus providing a rich family of surfaces to study, obtained by affine modification of $\C^2$. 

\section{Preliminaries}

\subsection{Some Notation} If $R$ is an integral domain, then ${\rm frac}(R)$ is the field of fractions of $R$. Given $n\in\N$, $R^{[n]}$ is the polynomial ring in $n$ variables over $R$, and $R^{[\pm n]}$ is the ring of Laurent polynomials in $n$ variables over $R$. If $S$ is an integral domain containing $R$, then ${\rm tr.deg}_RS$ is the transcendence degree of 
${\rm frac}(S)$ over ${\rm frac}(R)$. 

Assume that $k$ is algebraically closed. Given distinct $c_1,\hdots ,c_n\in k$, let 
\[
\A^1_{*n}(k)=\A^1_{*n}={\rm Spec}\, k[t,(t-c_1)^{-1},\hdots ,(t-c_n)^{-1}]
\]
i.e., an affine line over $k$ with $n$ points removed. If $n=1$, we also write $\A^1_*$. Given $d\in\N$, $\A^d_k$ denotes affine space over $k$ of dimension $d$. 

\subsection{The Degree-Neutral Invariant} 
Let $R$ be an integral domain, and
let $\deg :R\to \Z\cup\{ -\infty\}$ be a degree function. We say $\deg$ is  
{\bf non-negative} if $\deg$ is of the form $\deg :R\to\N\cup\{-\infty \}$, and {\bf trivial} if $\deg (R\setminus\{ 0\})=\{ 0\}$.
The induced filtration is
\[
R=\bigcup_{d\in\Z}\mathcal{F}_d
\]
where the sets $\mathcal{F}_d=\{ r\in R\, |\, \deg r\le d\}$ are the associated {\bf degree modules}. 
Note that $\deg$ can be extended to $K={\rm frac}(R)$ by letting $\deg (f/g)=\deg f-\deg g$ for $f,g\in R$, $g\ne 0$. 
Note also that, if $R$ is a field, then $\deg$ is a degree function on $R$ if and only if $(-\deg)$ is a discrete valuation of $R$. 

Recall that a subalgebra $A\subset R$ is {\bf factorially closed} in $R$ if $rs\in A$ for nonzero $r,s\in R$ implies $r\in A$ and $s\in A$.

\begin{proposition}\label{degree} With the assumptions and notation above:
\begin{itemize}
\item [{\bf (a)}] $\mathcal{F}_0$ is a subring of $R$ which is integrally closed in $R$.
\item [{\bf (b)}] $\mathcal{F}_d$ is an ideal of $\mathcal{F}_0$ for each $d\le 0$. 
\item [{\bf (c)}] If $\deg$ is non-negative, then $\mathcal{F}_0$ is factorially closed in $R$. Consequently, $R^*\subset\mathcal{F}_0$. 
\item [{\bf (d)}] If $R$ is a normal ring, then $\mathcal{F}_0$ is a normal ring. 
\item [{\bf (e)}] If $R$ is a field, then $\mathcal{F}_0$ is a valuation ring of $R$ and ${\rm frac}(\mathcal{F}_0)=R$. 
\end{itemize} 
\end{proposition}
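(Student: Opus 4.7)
The plan is to rely throughout on the two defining properties of a degree function, namely $\deg(rs)=\deg r+\deg s$ and $\deg(r+s)\le\max(\deg r,\deg s)$, which force $\deg 1=0$ and, for units of $R$, $\deg u+\deg(u^{-1})=0$. These are the only ingredients needed for most items.

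For \textbf{(a)} and \textbf{(b)}, I would argue directly: the closure of $\mathcal{F}_0$ under addition follows from the max-inequality, closure under multiplication from additivity of $\deg$, and $1\in\mathcal{F}_0$ from $\deg 1=0$. For integral closure in $R$, take $r\in R$ satisfying a monic relation $r^n+a_{n-1}r^{n-1}+\cdots+a_0=0$ with $a_i\in\mathcal{F}_0$; applying $\deg$ to $r^n=-\sum a_i r^i$ and using the max-inequality yields $n\deg r\le \max_{i<n}(i\deg r)$, which is impossible unless $\deg r\le 0$. Part (b) is immediate from additivity of $\deg$: if $r\in\mathcal{F}_0$ and $s\in\mathcal{F}_d$, then $\deg(rs)\le d$.

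For \textbf{(c)}, since $\deg$ takes values in $\N\cup\{-\infty\}$, the equation $\deg r+\deg s=\deg(rs)\le 0$ with nonzero $r,s$ forces $\deg r=\deg s=0$, giving factorial closure; applying this to $u\cdot u^{-1}=1$ for $u\in R^{*}$ yields $R^{*}\subset\mathcal{F}_0$. For \textbf{(d)}, I would reduce to (a): any element $\alpha\in\mathrm{frac}(\mathcal{F}_0)$ integral over $\mathcal{F}_0$ lies in $\mathrm{frac}(R)$ and is integral over $R$, hence belongs to $R$ by normality; but then $\alpha\in R$ is integral over $\mathcal{F}_0$, so (a) places it in $\mathcal{F}_0$.

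Part \textbf{(e)} is the observation already flagged in the text: when $R$ is a field, $v:=-\deg$ is a valuation on $R$, and $\mathcal{F}_0=\{r\in R:v(r)\ge 0\}$ is by definition its valuation ring. The identity $\mathrm{frac}(\mathcal{F}_0)=R$ then follows because for any nonzero $r\in R$, either $r$ or $r^{-1}$ lies in $\mathcal{F}_0$, so $r=r/1$ or $r=1/(r^{-1})$ exhibits $r$ as a quotient of elements of $\mathcal{F}_0$. The only step requiring any genuine care is the integral-closure argument in (a), since (d) depends on it and since one must be sure the max-inequality is applied to distinct-degree terms correctly; everything else is bookkeeping with the two defining axioms.
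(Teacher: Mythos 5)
Your proof is correct, but it takes a more elementary route than the paper for the substantive parts. The paper's entire argument for (a), (d), (e) is a single structural observation: extend $\deg$ to $K=\mathrm{frac}(R)$, note that $V=\{f\in K : \deg f\le 0\}$ is a valuation ring of $K$, and write $\mathcal{F}_0=V\cap R$; integral closedness, normality, and the valuation-ring statement then all follow at once from standard facts about valuation rings and intersections of integrally closed subrings of a common fraction field. You instead prove (a) by hand, applying $\deg$ to a monic integral relation $r^n=-\sum_{i<n}a_ir^i$ to get $n\deg r\le\max_{i<n}(i\deg r)$ and hence $\deg r\le 0$; this is a clean and correct computation (and note you never actually need the ``distinct degrees force equality'' refinement you worry about at the end --- the plain inequality suffices, since $\deg r>0$ would give $n\deg r\le(n-1)\deg r$). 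Your (d) is then a tidy reduction to (a) via normality of $R$, where the paper instead invokes normality of $V\cap R$ directly; your (e) coincides with the paper's. The trade-off: the paper's proof is shorter and exhibits the conceptual reason all the parts hold (namely that $\mathcal{F}_0$ is the trace of a valuation ring), while yours is self-contained and avoids importing the valuation-theoretic facts. Both (b) and (c) are handled the same way in each; the only cosmetic gap is that for (b) you verify only the $\mathcal{F}_0$-module multiplication and leave additive closure of $\mathcal{F}_d$ implicit, which is harmless since it is the same max-inequality argument as in (a).
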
 

\begin{proof} Extend $\deg$ to $K$ and let $V=\{ f\in K\,\vert\, \deg f\le 0\}$. Then $V$ is a valuation ring of $K$, and $\mathcal{F}_0=V\cap R$. This implies parts (a), (d) and (e), and part (b) is clear.
For part (c), if $ab\in\mathcal{F}_0$ for nonzero $a,b\in R$, then $0=\deg (ab)=\deg a+\deg b$ in $\N$ implies $\deg a=\deg b=0$, so $a,b\in\mathcal{F}_0$.
\end{proof}

\begin{definition} {\rm The {\bf degree-neutral invariant} of $R$ is the subring of $R$ defined by the intersection of all subrings of the form $\mathcal{F}_0$ for some non-negative degree function on $R$, denoted by $\Delta^0 (R)$. We say that $R$ is {\bf degree-rigid} if $\Delta^0(R)=R$, i.e., every non-negative degree function on $R$ is trivial. }
\end{definition}

The degree-neutral invariant has the following properties. 
\begin{enumerate}
\item $\Delta^0(R)$ is a factorially closed subring of $R$, being the intersection of factorially closed subrings, and $\Delta^0(R)$ is an invariant subring of $R$. 
Therefore, $\Delta^0(R)$ is an algebraically closed subring of $R$, and if $R$ is a UFD, then $\Delta^0(R)$ is a UFD. 
\item $R^*\subset\Delta^0(R)$. 
\item If $S\subset R$ is a subring, then $\Delta^0(S)\subset\Delta^0(R)$, since any non-negative degree function on $R$ restricts to a non-negative degree function on $S$. 
\item If $R[x_1,\hdots ,x_n]\cong R^{[n]}$, then by considering the degree functions $\deg_{z_i}$, $1\le i\le n$, it is clear that $\Delta^0\left( R^{[n]}\right)\subset R$.
\item The Makar-Limanov invariant $ML(R)$ is defined by the intersection of subrings of the form $\mathcal{F}_0$ for certain non-negative degree functions; see \cite{Crachiola.06b,Freudenburg.17}. Therefore, $\Delta^0(R)\subset ML(R)$. $R$ is said to be rigid if $ML(R)=R$. Thus, degree-rigid implies rigid. 
\end{enumerate}

Recall that $R$ is {\bf strongly invariant} if the following condition holds (see \cite{Abhyankar.Eakin.Heinzer.72}). 
\begin{quote}
For any integer $n\ge 0$ and subring $S\subset R[x_1,\hdots ,x_n]\cong R^{[n]}$, if there exist $y_1,\hdots ,y_n$ in $R^{[n]}$ such that 
$R[x_1,\hdots ,x_n]=S[y_1,\hdots ,y_n]$, then $R=S$. 
\end{quote}

\begin{lemma}\label{deg-rig} Suppose that $R$ is degree-rigid.
\begin{itemize}
\item [{\bf (a)}] $\Delta^0(R^{[n]})=R$
\item [{\bf (b)}] $R$ is strongly invariant.
\end{itemize}
\end{lemma}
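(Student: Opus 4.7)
The plan is to derive (a) immediately from properties (3) and (4) of $\Delta^0$, and then to bootstrap (a), property (1), and a transcendence-degree argument to obtain (b).

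For (a), I would observe that the inclusion $R\subset R^{[n]}$ yields $\Delta^0(R)\subset\Delta^0(R^{[n]})$ by property (3), while property (4) gives $\Delta^0(R^{[n]})\subset R$. Since $R$ is degree-rigid, $\Delta^0(R)=R$, so both inclusions collapse to the equality $\Delta^0(R^{[n]})=R$.

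For (b), suppose $S\subset R^{[n]}$ and $y_1,\ldots,y_n\in R^{[n]}$ satisfy $R[x_1,\ldots,x_n]=S[y_1,\ldots,y_n]$, where the right-hand side carries the polynomial-ring structure, so that $y_1,\ldots,y_n$ are algebraically independent over $S$ and $S[y_1,\ldots,y_n]\cong S^{[n]}$. My plan is to show both inclusions $R\subset S$ and $S\subset R$. First, property (4) applied to $S^{[n]}$ combined with part (a) gives
\[
R=\Delta^0(R^{[n]})=\Delta^0(S[y_1,\ldots,y_n])\subset S,
\]
which handles $R\subset S$. For the reverse inclusion I would run a tower-law argument on the chain of fields ${\rm frac}(R)\subset{\rm frac}(S)\subset{\rm frac}(R^{[n]})$: the outer transcendence degree is $n$, and the transcendence degree of ${\rm frac}(R^{[n]})={\rm frac}(S^{[n]})$ over ${\rm frac}(S)$ is also $n$, so the tower law forces ${\rm tr.deg}_{{\rm frac}(R)}{\rm frac}(S)=0$. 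Every element of $S$ is then algebraic over $R$, and by property (1) combined with part (a) the subring $R=\Delta^0(R^{[n]})$ is algebraically closed in $R^{[n]}$, which yields $S\subset R$.

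The main obstacle I anticipate is the step that quotes property (4) for $S[y_1,\ldots,y_n]$: it relies on a genuine polynomial-ring structure over $S$, i.e.\ on the $y_i$ being algebraically independent over $S$. This independence is also what makes the transcendence-degree equality in the second inclusion go through, so everything hinges on reading $S[y_1,\ldots,y_n]$ as a polynomial algebra of the same rank $n$ as $R[x_1,\ldots,x_n]$.
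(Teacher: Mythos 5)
Your proposal is correct and follows essentially the same route as the paper: part (a) is the two inclusions $\Delta^0(R)\subset\Delta^0(R^{[n]})\subset R$ (the paper just unwinds property (3) by restricting the degree function to $R$ directly), and part (b) is the same chain $R=\Delta^0(R^{[n]})\subset S$ followed by the transcendence-degree tower and algebraic closedness of $R$ in $R^{[n]}$ to get $S\subset R$. Your caveat about needing $S[y_1,\ldots,y_n]\cong S^{[n]}$ matches how the paper itself reads the definition of strong invariance.
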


\begin{proof} Let $\delta$ be a non-negative degree function on $R^{[n]}$ with induced filtration $\bigcup_{i\in\N}\mathcal{F}_i$. 
Then $\delta$ restricts to $R$, so $\delta (f)=0$ every $f\in R\setminus\{ 0\}$, and $R\subset\mathcal{F}_0$. Therefore, $R\subset\Delta^0(R^{[n]})\subset R$, which implies
$\Delta^0(R^{[n]})=R$. This shows part (a).

For part (b), assume that $B$ is a ring containing $R$, and $B\cong_RR^{[n]}$ for some $n\ge 0$. Let $S\subset B$ be a subring such that $B\cong_SS^{[n]}$. 
Then $R=\Delta^0(B)\subset S\subset R^{[n]}$. Therefore, ${\rm tr.deg}_RS\le n$ and:
\[
n={\rm tr.deg}_RB={\rm tr.deg}_RS+{\rm tr.deg}_SB={\rm tr.deg}_RS+n \implies {\rm tr.deg}_RS=0
\]
Since both $S$ and $R$ are algebraically closed in $B$, it follows that $R$ is algebraically closed in $S$. Therefore, $R=S$.
\end{proof}

\subsection{Samuel's Criterion}
A well known criterion for a ring to be a UFD is given by Samuel \cite{Samuel.64}, Proposition 7.6.
In an integral domain $A$, elements $a,b\in A$ are {\bf relatively prime} if $aA\cap bA=abA$.

\begin{proposition}\label {Samuel-lemma}  Assume that $A$ is an integral domain and $a,b\in A$ are relatively prime and nonzero. Let $A[X]\cong A^{[1]}$ and $A'=A[X]/(aX-b)$. 
\begin{itemize}
\item [{\bf (a)}] $aX-b$ is a prime element of $A[X]$. 
\item [{\bf (b)}] If $A$ is a noetherian UFD and $aA$ and $aA+bA$ are prime ideals of $A$, then $A'$ is a noetherian UFD.
\end{itemize}
\end{proposition}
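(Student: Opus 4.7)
My plan is to prove part (a) by realizing $A' = A[X]/(aX-b)$ as a subring of $A[1/a]$, and to deduce part (b) by applying Nagata's criterion to the prime element (image of) $a$ in $A'$.

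For part (a), consider the evaluation homomorphism $\phi \colon A[X] \to A[1/a]$ sending $X \mapsto b/a$. Since $A[1/a]$ is an integral domain, the claim that $aX-b$ is prime is equivalent to showing $\ker\phi = (aX-b)A[X]$; the inclusion $\supseteq$ is immediate. For the reverse inclusion, let $f \in \ker\phi$. Because $aX-b$ has unit leading coefficient in $A[1/a][X]$, Euclidean division gives $f = (aX-b)q + r$ with $q \in A[1/a][X]$ and $r \in A[1/a]$; evaluating at $X = b/a$ forces $r = 0$. The remaining task — which is the main technical step — is to upgrade $q$ from $A[1/a][X]$ to $A[X]$. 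Let $k\ge 0$ be minimal with $a^k q \in A[X]$ and suppose for contradiction that $k \ge 1$. Writing $a^k q = \sum c_i X^i$ with $c_i \in A$, reduce the identity $a^k f = (aX-b)(a^k q)$ modulo $a$: the left-hand side vanishes (since $k\ge 1$), so $bc_i \in aA$ for every $i$. The hypothesis $aA \cap bA = abA$ then yields $bc_i \in abA$, and cancellation by $b$ in the domain $A$ gives $c_i \in aA$. This contradicts the minimality of $k$, so $q \in A[X]$ and the kernel computation is complete. Consequently $A' \hookrightarrow A[1/a]$ is a domain and $aX-b$ is prime.

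For part (b), the plan is to invoke Nagata's criterion: a noetherian domain whose localization at a nonzero prime element is a UFD is itself a UFD. Apply this to $R = A'$ and $p = \bar a$, the image of $a$ in $A'$. Noetherianity of $A'$ is automatic as a finitely generated $A$-algebra, and part (a) shows that the natural map $A \to A'$ is injective, so $\bar a \ne 0$. To see that $\bar a$ is prime, use the isomorphism
\[
A'/\bar a A' \;\cong\; A[X]/(a,\,aX-b) \;=\; A[X]/(a,b) \;\cong\; \bigl(A/(aA+bA)\bigr)[X],
\]
which is a domain because $aA+bA$ is prime in $A$. Finally, inside $A'$ the relation $\bar a X = b$ shows that $X = b/\bar a$ after inverting $\bar a$, hence $A'[1/\bar a] = A[b/a,1/a] = A[1/a]$, a localization of the UFD $A$ and therefore itself a UFD. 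Nagata's criterion now gives that $A'$ is a UFD.

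The main obstacle is the kernel computation in part (a): showing that the quotient $q$ obtained by division in $A[1/a][X]$ actually has coefficients in $A$ is the one place where the full strength of the relative-primeness hypothesis $aA \cap bA = abA$ is used, whereas part (b) is a relatively formal application of Nagata once (a) is in hand.
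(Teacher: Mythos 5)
Your proof is correct. Note that the paper itself gives no argument for this proposition --- it simply cites Proposition 7.6 of Samuel's \emph{Lectures on unique factorization domains} --- so there is no in-paper proof to compare against; what you have written is essentially the standard argument (and, in substance, Samuel's own): part (a) by identifying $\ker\phi$ with $(aX-b)$ via division in $A[1/a][X]$ followed by the descent step that clears denominators using $aA\cap bA=abA$, and part (b) by Nagata's criterion applied to the prime element $\bar a$, using $A'/\bar aA'\cong (A/(aA+bA))[X]$ and $A'[1/\bar a]=A[1/a]$. All the steps check out: the coefficientwise reduction $bc_i\in aA\Rightarrow c_i\in aA$ is exactly where relative primeness enters, and the identification $(a,aX-b)=(a,b)$ in $A[X]$ is valid in both directions. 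One small observation: your argument never uses the hypothesis that $aA$ is prime in $A$, since a localization of a UFD is a UFD whether or not $a$ is prime; this does not affect correctness --- it just means you have proved a marginally stronger statement than the one quoted.
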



\section{Surfaces defined by $\A^1_*$ Embeddings}

In this section, we use certain closed embeddings of the punctured line $\A^1_*$ in the plane $\A^2_k$ to define surfaces belonging to $\mathscr{S}$. 

\subsection{A Family of Surfaces in $\mathscr{S}$}

\begin{theorem}\label{lemma1} Let $A=k[x,y]\cong k^{[2]}$ and let $a,b\in A$ be such that:
\[
A/aA\cong k^{[\pm 1]}\,\, ,\,\, A/bA\cong k^{[1]}\,\, ,\,\, A/(aA+bA)\cong k
\]
Define the ring $B\subset A_a$ by $B=A[b/a]$. 
\begin{itemize}
\item [{\bf (a)}] $B$ is a UFD and $B^*=k^*$.
\item [{\bf (b)}] If $k$ is algebraically closed and $X={\rm Spec}(B)$, then $X\in\mathscr{S}$. 
\end{itemize}
\end{theorem}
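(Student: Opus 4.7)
The plan is to derive part (a) from Samuel's criterion (Proposition~\ref{Samuel-lemma}) and then combine this with a direct Jacobian check for part (b).

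For part (a), I would apply Samuel's criterion to $a,b \in A$. The ideal $aA$ is prime because $A/aA \cong k^{[\pm 1]}$ is a domain, and $aA+bA$ is maximal (hence prime) because $A/(aA+bA) \cong k$ is a field. To check that $a,b$ are relatively prime in $A$, I would note that $b \notin aA$ (otherwise $aA+bA = aA$, so the hypothesis $A/(aA+bA)\cong k$ would contradict $A/aA \cong k^{[\pm 1]}$); since $a$ is a prime element of the UFD $A$ and does not divide $b$, this forces $aA \cap bA = abA$. Samuel's criterion then yields that $B \cong A[T]/(aT-b)$ is a noetherian UFD.

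For the claim $B^*=k^*$, I would exploit the isomorphism
\[
B/aB \;\cong\; A[T]/(a,\,aT-b) \;=\; A[T]/(a,b) \;\cong\; \bigl(A/(aA+bA)\bigr)[T] \;\cong\; k^{[1]}.
\]
In particular $aB$ is a nonzero prime ideal of $B$, so $1/a \notin B$. Since $B \subset A_a$ and $(A_a)^* = k^* \cdot a^{\Z}$, any unit $u \in B^*$ has the form $u=\alpha a^j$ with $\alpha \in k^*$ and $j \in \Z$. If $j>0$, then $u \in aB$ and its image in $B/aB \cong k[T]$ is $0$, not a unit; if $j<0$, then $1/a \in B$ follows by multiplying $u$ by a suitable element of $A$, contradicting $aB \neq B$. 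Hence $j=0$ and $B^*=k^*$.

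For part (b), the ring $B$ is affine over $k$, factorial by (a), has trivial units by (a), and is of Krull dimension $2$ because ${\rm frac}(B) = {\rm frac}(A)$. The only remaining property is smoothness. Writing $B = k[x,y,T]/(aT-b)$, the Jacobian partials of $aT-b$ are $a_xT-b_x$, $a_yT-b_y$, and $a$; these can simultaneously vanish on ${\rm Spec}(B)$ only when $a=b=0$, which by hypothesis reduces to the single closed point $P = V(aA+bA) \in {\rm Spec}\,A$. Since $aA+bA = \mathfrak{m}_P$, the elements $a,b$ form a regular system of parameters of the local ring $A_{\mathfrak{m}_P}$, so $\nabla a(P)$ and $\nabla b(P)$ are linearly independent in $k^2$. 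Therefore no $T_0 \in k$ can satisfy $\nabla b(P) = T_0\,\nabla a(P)$, which rules out all candidate singular points. The principal obstacle I foresee is the units computation: because $B$ sits strictly between $A$ and $A_a$, the nontriviality of $B/aB$ must be used to block $1/a$ from entering $B$; once this is handled, the remaining assertions (factoriality via Samuel, affineness, dimension, smoothness) follow by routine verification.
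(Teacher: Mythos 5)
Your proof is correct. For part (a) you follow the same route as the paper: Samuel's criterion applied to $B\cong A[T]/(aT-b)$, with $aA$ prime since $A/aA\cong k^{[\pm1]}$ is a domain and $aA+bA$ maximal. Your units argument is a fleshed-out version of the paper's one-line remark that $A\subset B\subsetneq A_a$ forces $B^*=A^*$: you make explicit that $(A_a)^*=k^*\cdot a^{\Z}$ (using that $a$ is prime in the UFD $A$) and that $a\notin B^*$ because $B/aB\cong (A/(aA+bA))[T]\cong k^{[1]}$ is a nonzero ring; this is a worthwhile expansion, since the paper's assertion as written leans implicitly on exactly this computation. For part (b) you genuinely diverge: the paper identifies $X$ with $W\setminus\Gamma'$, where $W$ is the blow-up of $\A^2_k$ at the point $P$ defined by $(a,b)$ and $\Gamma'$ is the proper transform of $\{a=0\}$, so that smoothness is inherited from $W$; you instead run the Jacobian criterion directly on the hypersurface $aT=b$ in $\A^3_k$, reducing the singular locus to points over $P$ and excluding them because $a,b$ generate $\mathfrak{m}_P$, hence have independent gradients at $P$. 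Both are valid (your Taylor-expansion-to-first-order argument linking a regular system of parameters to linear independence of gradients works in any characteristic, and $k$ perfect makes regular equivalent to smooth). The trade-off is that your check is more elementary and self-contained, while the paper's blow-up description is not just a proof device: it is reused in Theorem~\ref{Kodaira}, where the identification $X\cong W\setminus\Gamma'$ feeds directly into the logarithmic Kodaira dimension computation via Lemma~\ref{Kojima}. So if you adopt the Jacobian route you would still need to establish the affine-modification description separately for the later results.
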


\begin{proof} We have $B\cong A[X]/(aX-b)$, where $A[X]\cong A^{[1]}$. In addition, $a$ and $b$ are prime elements of $A$, $\gcd_A(a,b)=1$ and the ideal $aA+bA$ is prime in $A$. By Samuel's Criterion, it follows that $B$ is a UFD. 
Since $A\subset B\subset A_a$ and $B\ne A_a$, we see that $B^*=A^*=k^*$. This proves part (a).

To prove part (b), let $\Gamma\subset\A^2$ be the curve defined by $a=0$, noting that $\Gamma\cong\A^1_*$. 
Define the maximal ideal $I=(a,b)\subset A$, and let $P\in\A^2_k$ be the point defined by $I$, noting that $P\in\Gamma$. 
Let $W$ be the blow-up of $\A^2_k$ at $P$. Then $W$ is smooth. Let $\Gamma^{\prime}\subset W$ be the proper transform of $\Gamma$. Then $W\setminus\Gamma^{\prime}$ is smooth, and the coordinate ring of $W\setminus\Gamma^{\prime}$ is the affine modification:
\[
A[a^{-1}I]=A[a^{-1}(a,b)]=A[b/a]=B
\] 
Therefore, $X=W\setminus\Gamma^{\prime}$ is smooth, which implies $X\in\mathscr{S}$.
\end{proof}

\subsection{Logarithmic Kodaira Dimension}

Assume that the ground field $k$ is algebraically closed. Let $B=A[b/a]$ be a ring of the type described in {\it Theorem\,\ref{lemma1}}, that is, $A=k[x,y]\cong k^{[2]}$ and $a,b\in A$ are such that:
\[
A/aA\cong k^{[\pm 1]}\,\, ,\,\, A/bA\cong k^{[1]}\,\, ,\,\, A/(aA+bA)\cong k
\]
Let $X={\rm Spec}(B)$. By {\it Theorem\,\ref{lemma1}}, $X\in\mathscr{S}$. 
\begin{theorem}\label{Kodaira}  Let $\Gamma\subset\A^2_k$ be the curve defined by the ideal $aA$.
\begin{itemize} 
\item [{\bf (a)}] $\bar{\kappa}(X)=\bar{\kappa}(\A^2_k\setminus\Gamma )$
\item [{\bf (b)}] If $\Gamma$ has two places at infinity, then $\bar{\kappa}(X)\ge 0$. 
\item [{\bf (c)}] If the complement $\A^2_k\setminus\Gamma$ contains an open set isomorphic to $\A^1_*\times\A^1_*$, then $\bar{\kappa}(X)\le 0$.
\end{itemize}
\end{theorem}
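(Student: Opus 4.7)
\medskip
\noindent\textbf{Proof plan for Theorem \ref{Kodaira}.}

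For \textbf{(a)}, the plan is to build a common smooth projective SNC compactification of $X$ and $\A^2_k \setminus \Gamma$, then compare their log-canonical divisors via a single $(-1)$-curve contraction. Extend $\A^2_k \subset \PP^2_k$ and $\pi \colon W \to \A^2_k$ to $\bar{\pi} \colon \bar{W} \to \PP^2_k$, with line at infinity $L$, exceptional divisor $E$ over $P$, and $\bar{\Gamma}'$ the closure in $\bar{W}$ of the strict transform of $\Gamma$. Then
\[
X = \bar{W} \setminus (\bar{\Gamma}' \cup L), \quad \A^2_k \setminus \Gamma = \bar{W} \setminus (\bar{\Gamma}' \cup E \cup L).
\]
Since $\Gamma \cong \A^1_*$ is smooth at $P$, the curve $E$ meets $\bar{\Gamma}'$ transversally in a single point, so one may choose a log resolution $\sigma \colon V \to \bar{W}$ of $(\bar{W}, \bar{\Gamma}' + L)$ that is an isomorphism over a neighborhood of $E$ (all required blow-ups live at infinity). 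The strict transform $\tilde{E}$ is then a $(-1)$-curve in $V$ meeting the SNC boundary $D_X$ of $X$ in a single transverse point. Contracting $\tilde{E}$ by $\tau \colon V \to V'$ produces an SNC compactification $(V', D')$ of $\A^2_k \setminus \Gamma$ with $\tau^* D' = D_X + \tilde{E}$. Combining this with $K_V = \tau^* K_{V'} + \tilde{E}$ gives $K_V + D_X = \tau^*(K_{V'} + D')$, so the Iitaka dimensions of the two log canonicals agree and $\bar{\kappa}(X) = \bar{\kappa}(\A^2_k \setminus \Gamma)$.

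For \textbf{(b)}, by (a) it suffices to show $\bar{\kappa}(\A^2_k \setminus \Gamma) \geq 0$. My approach is to invoke the classical structure theorem of Miyanishi--Sugie (cf.\ Miyanishi, \emph{Open Algebraic Surfaces}): a smooth affine surface of log Kodaira dimension $-\infty$ admits an $\A^1$-fibration. If $\A^2_k \setminus \Gamma$ carried such a fibration, one extends it to a pencil of rational curves on a smooth compactification of $\A^2_k$ and analyzes how $\bar{\Gamma}$ fits into the pencil; the conclusion is that $\Gamma$ must be either a fiber or a cross-section meeting the boundary at infinity in a single place, contradicting the two-places-at-infinity hypothesis. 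Hence $\bar{\kappa}(\A^2_k \setminus \Gamma) \geq 0$.

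For \textbf{(c)}, the tool is monotonicity of log Kodaira dimension under open immersions: if $U \subset V$ is an open subvariety of a smooth variety, then $\bar{\kappa}(U) \geq \bar{\kappa}(V)$, because extending any SNC compactification of $V$ to one of $U$ only enlarges the boundary divisor and hence the log canonical. Together with the product formula $\bar{\kappa}(\A^1_* \times \A^1_*) = 2\bar{\kappa}(\A^1_*) = 0$, the hypothesis forces $\bar{\kappa}(\A^2_k \setminus \Gamma) \leq 0$, and part (a) delivers $\bar{\kappa}(X) \leq 0$.

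The principal obstacle lies in \textbf{(b)}: reducing to $\bar{\kappa} \geq 0$ either appeals to the full Miyanishi--Sugie classification of $\A^1$-ruled surfaces, or else requires a direct and delicate analysis of a log resolution at infinity to exhibit a nonzero log pluricanonical section arising from the two distinct places. Parts (a) and (c) are, by contrast, formal consequences of the standard SNC compactification machinery and divisorial monotonicity.
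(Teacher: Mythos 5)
Your parts (a) and (c) are in substance the paper's own argument. For (a), the paper isolates exactly your computation as Lemma \ref{Kojima}: blowing up a smooth point $P$ of the boundary curve gives $K_W+D\sim\pi^*(K_V+C)$, hence equality of all log-plurigenera; your version runs the same identity through an explicit compactification of $\PP^2$ and a contraction of $\tilde E$, which is fine (and makes explicit the reduction ``we may assume $C$ is SNC near $P$'' that the paper's lemma glosses). For (c), monotonicity under open immersion together with $\bar{\kappa}(\A^1_*\times\A^1_*)=0$ is precisely what the paper does.

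Part (b) is where you diverge, and where the gap sits. The paper does \emph{not} go through the Miyanishi--Sugie structure theory for $\bar{\kappa}=-\infty$: it directly cites the fact (Miyanishi, \emph{Open Algebraic Surfaces}, Ch.~2, Lemma 2.2.2) that a smooth affine plane curve with two places at infinity has positive log geometric genus, i.e.\ $h^0(K+D)>0$ on an SNC compactification of $\A^2_k\setminus\Gamma$, which immediately gives $\bar{\kappa}\ge 0$. Your contrapositive route --- $\bar{\kappa}=-\infty$ forces an $\A^1$-fibration, and an $\A^1$-fibration on $\A^2_k\setminus\Gamma$ forces $\Gamma$ to have one place at infinity --- is plausible and can be completed (it essentially amounts to the Miyanishi--Sugie theorem that $\bar{\kappa}(\A^2\setminus C)=-\infty$ for irreducible $C$ only if $C$ is a coordinate line), but the step you actually need, namely that $\Gamma$ must be ``a fiber or a cross-section meeting the boundary at infinity in a single place,'' is only asserted, not proved; extending the fibration across $\Gamma$ and controlling how $\Gamma$ sits among the degenerate members of the resulting pencil is the entire content of that classification, and it is considerably heavier machinery (resting on Abhyankar--Moh--Suzuki-type input) than the one-line genus computation the paper uses. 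As written, (b) is a sketch with its key implication outstanding; either carry out that pencil analysis or replace it with the direct log-geometric-genus argument.
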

In order to prove the theorem, we need the following lemma, which is well-known. 

\begin{lemma}\label{Kojima}
Let $V$ be a smooth projective surface defined over $k$ and let $C$ be a reduced curve on $V$. Let $\pi: W \to V$ be the blow-up at a point $P$ on $C$ and let $D$ be the proper transform of $C$ on $W$. If $P$ is a smooth point of $C$, then $\bar{\kappa}(W \setminus D) = \bar{\kappa}(V \setminus C)$. 
\end{lemma}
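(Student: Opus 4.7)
The plan is to reduce to the case in which $C$ is itself a simple normal crossing divisor and then to apply the blow-up formula $K_W + D = \pi^*(K_V + C)$ together with birational invariance of the Kodaira--Iitaka dimension.

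For the reduction I would choose a sequence of blow-ups $\mu : V' \to V$ centered over $\mathrm{Sing}(C)$, arranged so that $\mu$ is an isomorphism on a Zariski neighborhood of $P$ and $C' := \mu^{-1}(C)_{\mathrm{red}}$ is SNC on $V'$. Let $P' \in V'$ be the unique preimage of $P$, which is automatically a smooth point of $C'$. Let $\pi' : W' \to V'$ be the blow-up at $P'$, with exceptional curve $E'$ and proper transform $D'$ of $C'$. Because $\mu$ is an isomorphism over $V \setminus C$, there is a canonical isomorphism $V' \setminus C' \cong V \setminus C$, which gives $\bar{\kappa}(V \setminus C) = \bar{\kappa}(V' \setminus C')$. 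Gluing this with the canonical identification of $\pi^{-1}(U)$ and $\pi'^{-1}(\mu^{-1}(U))$ on a neighborhood $U$ of $P$ where $\mu$ is an isomorphism yields an isomorphism $W \setminus D \cong W' \setminus D'$, whence $\bar{\kappa}(W \setminus D) = \bar{\kappa}(W' \setminus D')$. Replacing $(V, C, W, D)$ by $(V', C', W', D')$, I may assume from the outset that $C$ is SNC.

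In the SNC case the computation is short. Because $P$ is a smooth point of $C$, the multiplicity of $C$ at $P$ is $1$, so $\pi^*C = D + E$. Combined with the blow-up formula $K_W = \pi^*K_V + E$ this yields
$$K_W + D = \pi^*K_V + E + (\pi^*C - E) = \pi^*(K_V + C).$$
Moreover $D$ is itself SNC on $W$: off of $E$ it coincides with $C$, and along $E$ it consists of a single smooth branch meeting $E$ transversally at a single point. Since $\pi_*\mathcal{O}_W = \mathcal{O}_V$, the projection formula gives $H^0(W, n\pi^*(K_V + C)) \cong H^0(V, n(K_V + C))$ for every $n \ge 0$, so
$$\bar{\kappa}(V \setminus C) = \kappa(V, K_V + C) = \kappa(W, K_W + D) = \bar{\kappa}(W \setminus D),$$
which is the desired equality.

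The main obstacle I anticipate is the patching step producing the isomorphism $W \setminus D \cong W' \setminus D'$: although $W$ and $W'$ are not isomorphic in general, one has to glue the identification of $V \setminus C$ with $V' \setminus C'$ (valid away from the exceptional loci) together with the canonical matching of the two blow-ups on a neighborhood of $P$, and check that the glued map carries $D$ to $D'$. Once that bookkeeping is complete, the SNC case reduces to a single application of the blow-up formula for $K + C$ and the fact that $H^0$ of pluri-log-canonical bundles is preserved under pullback by a proper birational morphism with connected fibers.
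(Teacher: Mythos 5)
Your proof is correct and follows essentially the same route as the paper's: reduce to the case where $C$ is an SNC divisor, use that $P$ is a smooth point of $C$ to get $K_W+D\sim\pi^*(K_V+C)$, and conclude via the isomorphism of pluri-log-canonical sections. You simply supply the details (the log resolution away from $P$ and the computation $\pi^*C=D+E$, $K_W=\pi^*K_V+E$) that the paper's proof states without elaboration.
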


\begin{proof}
We may assume that $C$ is an SNC-divisor, i.e., $C$ has only normal crossings and consists of smooth curves. Since $P$ is a smooth point of $C$, we have $K_W + D \sim \pi^*(K_V + C)$. So $H^0(W, \mathcal{O}_W(n(K_W+D))) \cong H^0(V, \mathcal{O}_V(n(K_V+C)))$ for any integer $n \geq 1$. Hence, $\bar{\kappa}(W \setminus D) = \bar{\kappa}(V \setminus C)$. 
\end{proof}

\begin{proof}[Proof of Theorem\,\ref{Kodaira}] Let $S=\A^2_k\setminus\Gamma$, let $P\in\A^2_k$ be the point defined by the maximal ideal $(a,b)$, let $\pi: W \to \A^2_k$ be the blow-up at the point $P$, and let $\Gamma^{\prime}\subset W$ be the proper transform of $\Gamma$. Then $X\cong W \setminus\Gamma^{\prime}$. Since $P$ is a smooth point of $\Gamma$, we infer from {\it Lemma\,\ref{Kojima}} that 
\[
\bar{\kappa}(X) = \bar{\kappa}(W  \setminus\Gamma^{\prime}) = \bar{\kappa}(S) 
\]
This proves part (a). 

For part (b), since $\Gamma$ is a smooth affine plane curve with two places at infinity, the log geometric genus of $S$ is positive; see, e.g., Chapter 2, Lemma 2.2.2 (p.\ 72) of \cite{Miyanishi.01}. 
Hence, $\bar{\kappa}(S) \geq 0$, so $\bar{\kappa}(X)\ge 0$. 

For part (c), since $S$ contains an open set $U\cong\A^1_*\times\A^1_*$, we have $\bar{\kappa}(S) \leq \bar{\kappa}( \A^1_* \times \A^1_*) =0$. Therefore, $\bar{\kappa}(X) \le  0$. 
\end{proof}

\section{The Rings $B_n$} 

\subsection{Definition} Let $A=k[x,y]\cong k^{[2]}$. Given $n\in\N$, $n\ge 0$, define the ring:
\[
B_n=A[(x-1)/(x^ny-1)]
\]
By {\it Theorem\,\ref{lemma1}}, we see that $B_n$ is a UFD and $B_n^*=k^*$. Let $u=(x-1)/(x^ny-1)$. 

\subsection{Fibrations} Assume that $k$ is algebraically closed, and fix the positive integer $n$. Let $V_n={\rm Spec}(B_n)$.
Given $\lambda\in k$, consider the fibers $x-\lambda$, $u-\lambda$ and $y-\lambda$. There are three cases.

\begin{enumerate}
\item General fiber: $\lambda\not\in\{ 0,1\}$
\subitem (i) ${\rm Spec}\, B/(x-\lambda)\cong {\rm Spec}\, k[y,u]/(u(\lambda^ny-1)+(1-\lambda))\cong \A^1_*$
\subitem (ii) ${\rm Spec}\, B/(u-\lambda)\cong {\rm Spec}\, k[x,y]/(\lambda(x^ny-1)-(x-1))\cong\A^1_*$
\subitem (iii) ${\rm Spec}\, B/(y-\lambda)\cong {\rm Spec}\, k[x,u]/(u(\lambda x^n-1)-(x-1))\cong\A^1_{*n}$
\smallskip
\item Reducible fiber: $\lambda =1$
\subitem (i) ${\rm Spec}\, B/(x-1)\cong {\rm Spec}\, k[y,u]/(u(y-1))\cong \A^1_k\cup\A^1_k$
\subitem (ii) ${\rm Spec}\, B/(u-1)\cong {\rm Spec}\, k[x,y]/(x(x^{n-1}y-1))\cong\A^1_k\cup\A^1_*$
\subitem (iii) ${\rm Spec}\, B/(y-1)\cong {\rm Spec}\, k[x,u]/((x-1)(u(x^{n-1}+\cdots +x+1)-1))\cong\A^1_k\cup\A^1_{*(n-1)}$
\smallskip
\item Zero fiber: $\lambda =0$
\subitem (i) ${\rm Spec}\, B/(x)\cong {\rm Spec}\, k[y]\cong \A^1_k$
\subitem (ii) ${\rm Spec}\, B/(u)\cong {\rm Spec}\, k[y]\cong\A^1_k$
\subitem (iii) ${\rm Spec}\, B/(y)\cong {\rm Spec}\, k[x]\cong\A^1_k$
\end{enumerate}

Note that the inclusions $k[x]\to B_n$ and $k[u]\to B_n$ induce $\A^1_*$-fibrations of $V_n$. Note also that 
the ideals $(x-\lambda)$, $(u-\lambda )$ and $(y-\lambda )$ are prime if and only if $\lambda\ne 1$. 

\subsection{$k$-Algebra Isomorphism Classes}

\begin{theorem}\label{main} Given $m,n\in\N\setminus\{ 0\}$, if $B_m\cong_kB_n$, then $m=n$. 
\end{theorem}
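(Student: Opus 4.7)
The plan is to extract the integer $n$ from the $k$-algebra $B_n$ via a ``generic-fiber rank'' attached to non-constant elements $f\in B_n$. The guiding example is $f=y$: after passing to $\overline{k(y)}$, the defining relation $u(x^ny-1)=x-1$ together with the factorization $x^ny-1=y\prod_{i=1}^n(x-\alpha_i)$ (where the $\alpha_i\in\overline{k(y)}$ are the $n$ distinct $n$-th roots of $1/y$) gives
\[
B_n\otimes_{k[y]}\overline{k(y)}\;\cong\;\overline{k(y)}\bigl[x,(x-\alpha_1)^{-1},\ldots,(x-\alpha_n)^{-1}\bigr].
\]
Thus the geometric generic fiber of the morphism $V_n\to\A^1_k$ induced by $y$ is $\A^1_{*n}$ over $\overline{k(y)}$, and its unit group modulo $\overline{k(y)}^*$ is free abelian of rank $n$.

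For each non-constant $f\in B_n$ define
\[
r(f)\;=\;{\rm rank}_\Z\!\Bigl(\bigl(B_n\otimes_{k[f]}\overline{k(f)}\bigr)^*\!\big/\overline{k(f)}^*\Bigr),
\]
and set $R(B_n)=\{r(f):f\in B_n\setminus k\}$. The set $R(B_n)$ is a $k$-algebra invariant of $B_n$, since any isomorphism $\phi\colon B_m\to B_n$ sends $k[f]$ to $k[\phi(f)]$ and induces an isomorphism of the associated generic fibers. From the fibration tables of Section~4 one reads $r(x)=r(u)=1$ (both generic fibers being $\A^1_*$) and $r(y)=n$, so $\{1,n\}\subseteq R(B_n)$. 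Given an isomorphism $\phi\colon B_m\to B_n$, taking $f=\phi(y_m)$ places $m\in R(B_n)$, and symmetrically $n\in R(B_m)$.

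The conclusion $m=n$ therefore reduces to the bound $\max R(B_n)\le n$, and this is the main obstacle. My approach is to pass to a smooth SNC completion $(\bar V_n,D_n)$ of $V_n$, obtained from the blowup of $\PP^2_k$ at $(1,1)$ by further blowing up to resolve the singularity of $\overline{\Gamma_n}$ at $[0:1:0]$---locally of the cuspidal form $u^n=v^{n+1}$---as well as any base points of the rational map $\bar V_n\dashrightarrow\PP^1_k$ induced by $f$. The smooth completion of the geometric generic fiber then meets $D_n$ in a number of points bounded by the combinatorics of the dual graph of $D_n$, in particular by the chain of rational components produced by the Euclidean-algorithm-style resolution of the cusp $u^n=v^{n+1}$. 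This combinatorial analysis yields $r(f)\le n$, and combined with the parallel bound $r(g)\le m$ for $g\in B_m$ obtained from $\phi^{-1}$, it forces $m=n$.
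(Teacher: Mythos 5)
Your reduction of the theorem to the bound $\max R(B_n)\le n$ is the fatal step: that bound is false. Recall that $V_n\to\A^2_k$ is an isomorphism over $\A^2_k\setminus\Gamma_n$, while every point of $\Gamma_n\setminus\{(1,1)\}$ has empty preimage (indeed $x-1=u(x^ny-1)$ forces $x=1$ on the preimage of $\Gamma_n$). Now take $f=x+y\in B_n$. For generic $\lambda$ the line $x+y=\lambda$ avoids $(1,1)$ and meets $\Gamma_n$ in the $n+1$ distinct roots of $x^n(\lambda-x)=1$, so its preimage in $V_n$ is $\A^1_{*(n+1)}$ and $r(x+y)=n+1>n$. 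Already for $n=1$ this gives an element with $r(f)=2$, and by taking curves of higher degree one sees that $R(B_n)$ is unbounded for every $n$ (and very plausibly equal to $\N\setminus\{0\}$ for all $n$), so the set $R(B_n)$ cannot distinguish $B_m$ from $B_n$ at all. No amount of dual-graph combinatorics on an SNC completion will rescue the inequality, because the degrees of the pencils cut out by arbitrary $f$ are unbounded; and in any case the final paragraph of your proposal is only an outline, not an argument. (A further, smaller issue: for reducible or positive-genus generic fibers your invariant $r(f)$ is not obviously well behaved, e.g.\ the unit group of a disconnected geometric generic fiber modulo $\overline{k(f)}^*$ need not be finitely generated.)

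The salvageable part of your idea is real, and it is in fact how the paper proceeds: the isomorphism type of the general fiber of a \emph{prime} element, together with the structure of its reducible fibers, is used to constrain where $\phi(y_m)$ can go. But the paper does not attempt a uniform bound over all regular functions. Instead it introduces a $\Z$-grading of $A_t$ ($t=x^my-1$, $\deg x=1$, $\deg y=-m$), restricts the resulting degree function to $B=k[x,y,u]=k[X,Y,U]$, and shows that any prime of negative degree must generate the same ideal as $y$; the fibration data for $x,y,u$ versus $X,Y,U$ then forces $\deg X,\deg Y,\deg U\ge 0$ or else $X=cy$, and each alternative is driven to a contradiction using that $B$ is a UFD with $B^*=k^*$. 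You would need some comparable mechanism that pins the candidate element $\phi(y_m)$ down to a short, controlled list before comparing fibers; comparing against the supremum over all of $B_n$ cannot work.
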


\begin{proof} It suffices to assume that $k$ is algebraically closed. 

Assume that $B_m\cong_kB_n$ and that $n>m\ge 1$. Then there exists a subalgebra $B\subset k(x,y)\cong k^{(2)}$ and elements $u,X,Y,U\in B$ such that:
\[
B=k[x,y,u]=k[X,Y,U]\,\, ,\,\, u(x^my-1)=x-1 \,\, ,\,\, U(X^nY-1)=X-1
\]
Let $A=k[x,y]$ and $t=x^my-1$, noting that $A\cong k^{[2]}$ and $A\subset B\subset B_t=A_t$. Define a $\Z$-grading of $A_t$ by letting $x,y$ be homogeneous 
with $\deg x=1$ and $\deg y=-m$. It is easy to see that:
\begin{equation}\label{localization}
h\in A_t \,\,\, {\rm and}\,\, \deg h<0 \implies h\in yA_t
\end{equation}
The degree function restricts to $B$. Let $B=\bigcup_{d\in\Z}\mathcal{F}_d$ be the filtration of $B$ by degree modules. 
We claim that the following two properties hold:
\begin{equation}\label{degree}
h\in B \,\,\, {\rm and}\,\, \deg h<0 \implies h\in yB
\end{equation}
and
\begin{equation}\label{prime}
h\in B \,\,\, {\rm is\,\, prime \,\, and}\,\, \deg h<0 \implies hB=yB \,\, {\rm and}\,\, k[h]=k[y]
\end{equation}
Consequently, when condition (\ref{prime}) holds, the general fiber of $h$ is $\A^1_{*m}$ and $h$ has exactly one reducible fiber, that fiber being isomorphic to $\A^1_k\cup\A^1_{*(m-1)}$. 

To prove these implications, note that, since $B$ is a UFD, $y$ is prime in $B$ and $yB+tB=B$, it follows that $yB_t\cap B=yB$. Therefore, if $h\in B$ and $\deg h<0$, then $h\in yB_t$ by the implication (\ref{localization}). But then $h\in yB_t\cap B=yB$. So the implication (\ref{degree}) is confirmed. 

Suppose $h\in B$ is prime and $\deg h<0$. Then by (\ref{degree}) we have $hB=yB$, since $y$ is prime in $B$. Since $B^*=k^*$, it follows that $k[h]=k[y]$. 
So the implication (\ref{prime}) is confirmed.

Since $Y$ is prime and the general fiber of $Y$ is $\A^1_{*n}$, and since $\A^1_{*n}\not\cong\A^1_{*m}$, it follows that $\deg Y\ge 0$. 

In addition, $U$ is prime, the general fiber of $U$ is $\A^1_*$, and the reducible fiber of $U$ is $\A^1_k\cup\A^1_*$. If $\deg U<0$, then $\A^1_*=\A^1_{*m}$ implies $m=1$. But then the reducible fibers give $\A^1_k\cup\A^1_k\cong\A^1\cup\A^1_*$, a contradiction. So $\deg U\ge 0$. 

Since $X^nY-1$ is prime and has a reducible fiber with $n+1>2$ components, it follows that $\deg (X^nY-1)\ge 0$. Since $X-1=U(X^nY-1)$, we also see that
$\deg (X-1)\ge 0$. 

There are 2 cases to consider.
\medskip

\noindent {\bf Case 1:} $\deg X\ge 0$. Then
\[
\deg X=\deg (X-1+1)\le\max\{ \deg (X-1),0\}=\deg (X-1)\le\max\{ \deg X,0\}=\deg X 
\]
which implies $\deg X=\deg (X-1)$. The same reasoning shows $\deg (X^nY)=\deg (X^nY-1)$. 
Therefore:
\begin{eqnarray*}
\deg X &=& \deg (X-1) \\
&=& \deg(U(X^nY-1)) \\
&=& \deg U + \deg (X^nY-1) \\
&=& \deg U + \deg (X^nY) \\
&=& \deg U + n\deg X +\deg Y
\end{eqnarray*}
Since $\deg X,\deg Y,\deg U\in\N$ and $n\ge 2$, this gives:
\[
0=\deg U+(n-1)\deg X+\deg Y \implies \deg X=\deg Y=\deg U=0
\]
But then
\[
k[X,Y,U]\subset \mathcal{F}_0\subset B=k[X,Y,U] \implies \mathcal{F}_0=B
\]
which is a contradiction, since $x\not\in\mathcal{F}_0$. So the case $\deg X\ge 0$ cannot occur.
\medskip

\noindent {\bf Case 2:} $\deg X< 0$. Then 
(3) implies $XB=yB$ and $k[X]=k[y]$. So $X=cy$ for some $c\in k^*$. Since the general fiber of $X$ is $\A^1_*$ and the general fiber of $y$ is $\A^1_{*m}$, it follows that $m=1$. 

Let $T=X^nY-1$, $K=k(X)=k(y)$ and $B_K=K\otimes_kB$. Since the generic fiber of $X=cy$ is $\A^1_*(K)$, we see that $B_K\cong K^{[\pm 1]}$. Specifically:
\[
B_K=K[x,y,u]=K[x,u]=K[y^{-1}(t+1),(y^{-1}-1)t^{-1}+y^{-1}]=K[t,t^{-1}]
\]
and 
\[
B_K=K[X,Y,U]=K[Y,U]=K[y^{-n}(T+1),(y-1)T^{-1}]=K[T,T^{-1}]
\]
It follows that $aT=bt^{\pm 1}$ for some $a,b\in k[y]$ with $\gcd (a,b)=1$. 

Assume $aT=bt$. Since $\gcd (a,b)=1$, we see that either both $a$ and $b$ are prime in $B$, or $a,b\in B^*=k^*$. 
The case $a,b\in k^*$ is not possible, since the reducible fiber of $T$ has $n+1>2$ components, and the reducible fiber of $t$ has $m+1=2$ components. Therefore, both $a$ and $b$ are prime in $B$. 
But then $aB=tB$ implies $t=ra\in K$ for some $r\in k^*$, a contradiction, since $t$ is transcendental over $K$. 

Assume $aTt=b$. Then $b\in aB$ implies $a\in k^*$, and $b$ has exactly two prime factors. One possibility is that $b$ has a root $\lambda\in k\setminus\{ 1\}$. But then either $T=r(y-\lambda)\in K$ or $t=r(y-\lambda )\in K$ for $r\in k^*$, which is a contradiction. The other possibility is that $b=s(y-1)$ for $s\in k^*$. The equality $Tt=a^{-1}s(y-1)$ shows $a^{-1}s=-1$. Since $X=cy$, we have:
\begin{eqnarray*}
Tt=(X^nY-1)(xy-1)=1-y &\implies&  X^nYxy-X^nY-xy=-y \\
&\implies& X^nYx-cX^{n-1}Y-x=-1
\end{eqnarray*}
Since $n\ge 2$, we conclude that $xB+yB=B$, a contradiction since $xB+yB$ is a maximal ideal of $B$.
So the case $\deg X<0$ cannot occur. 

The assumption that $m\ne n$ thus leads to contradiction. Therefore, $m=n$. 
\end{proof}

\subsection{Degree-Rigidity of $B_n$}

\begin{theorem}\label{non-neg} $B_n$ is degree-rigid for each $n\ge 1$. 
\end{theorem}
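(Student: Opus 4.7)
The plan is to show that any non-negative degree function $\deg$ on $B_n$ must vanish on each of the three generators $x$, $y$, and $u=(x-1)/(x^ny-1)$. Once this is established, every monomial in $x,y,u$ has degree $0$ and hence, by the ultrametric inequality $\deg(f+g)\le\max(\deg f,\deg g)$, every element of $B_n=k[x,y,u]$ has degree at most $0$; since all nonzero elements have non-negative degree, $\deg$ is then forced to be trivial. The entire problem therefore reduces to bounding the three invariants $\alpha:=\deg x$, $\beta:=\deg y$, and $\gamma:=\deg u$.

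The central identity is the defining relation $u(x^ny-1)=x-1$, which yields $\gamma+\deg(x^ny-1)=\deg(x-1)$. Using the standard fact that $\deg(h-1)=\deg h$ whenever $\deg h>0$ and $\deg(h-1)=0$ whenever $h\ne 1$ and $\deg h=0$, I would split into three subcases by the sign pattern of $(\alpha,\beta,\gamma)$. For $n\ge 2$ each subcase is eliminated from this single equation alone: $\alpha>0$ rearranges to $(n-1)\alpha+\beta+\gamma=0$; $\alpha=0$ with $\beta>0$ gives $\gamma+\beta=0$; and $\alpha=\beta=0$ gives $\gamma=0$ (using that $x^ny-1\ne 0$ in $B_n$ via the inclusion $k[x,y]\hookrightarrow B_n$). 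Each forces $\alpha=\beta=\gamma=0$.

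The case $n=1$ is the main obstacle, because the subcase $\alpha>0,\ \beta=\gamma=0$ is in fact consistent with the single relation $u(xy-1)=x-1$ (indeed $\gamma+\alpha=\alpha$ is satisfied). To rule it out I would exploit the $x\leftrightarrow u$ symmetry of the relation: rearranging $u(xy-1)=x-1$ yields $x(uy-1)=u-1$. Under the assumption $\alpha>0,\ \beta=\gamma=0$, both $u-1$ and $uy-1$ are nonzero in $B_1$ (because $u$ is a prime element of $B_n$ by the fibration computation in Section 4.2, hence a non-unit, so $u\ne 1$ and $uy\ne 1$), and both have degree $0$; the rearranged relation then forces $\alpha+0=0$, the desired contradiction.

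The remaining technical points are purely routine: confirming non-vanishing of the auxiliary elements $x^ny-1$, $u-1$, and $uy-1$ in $B_n$, which follows from the inclusion $k[x,y]\subset B_n$ together with the fact that $u$ is a prime element of $B_n$ (and in particular a non-unit, since $B_n^*=k^*$ by \emph{Theorem \ref{lemma1}}). No other cases or machinery are needed, and the proof closes once the three generator-degrees are pinned down to zero.
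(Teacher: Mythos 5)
Your proof is correct, and for $n\ge 2$ it is essentially the paper's argument in a different packaging: both extract the equation $\deg u+\deg(x^ny-1)=\deg(x-1)$, reduce it via $\deg(h-\lambda)=\deg(h)$ to $0\le -(n-1)\deg x-\deg y$, and conclude that $\deg x=\deg y=0$. The genuine divergence is in the case $n=1$, which is exactly where the single relation fails to pin down $\deg x$. The paper disposes of it by invoking an automorphism $\alpha$ of $B_1$ with $\alpha(y)=x$ (whose existence rests on the symmetric presentation $C_2$ of \emph{Theorem~\ref{Takanori}}) and rerunning the computation for $\delta\circ\alpha$; you instead rewrite the defining relation as $x(uy-1)=u-1$ and read off $\deg x=\deg(u-1)-\deg(uy-1)=0$ directly, using only that $u$ and $uy$ are nonconstant non-units (which follows from $B_1^*=k^*$, \emph{Theorem~\ref{lemma1}}). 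Both devices exploit the same hidden symmetry of $B_1$ --- your identity is precisely the statement that $x$ and $u$ play interchangeable roles --- but your version is self-contained and does not lean on \emph{Theorem~\ref{Takanori}}. A second, minor difference: since you establish $\deg u=0$ along the way, you can finish by the ultrametric inequality on $k$-linear combinations of monomials in $x,y,u$, whereas the paper only shows $k[x,y]\subset\mathcal{F}_0$ and then appeals to the factorial closedness of $\mathcal{F}_0$ to absorb all of $B_n$. Both closings are sound; yours is marginally more elementary, the paper's requires one less degree computation.
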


\begin{proof} Let $B_n=k[x,y,(x-1)/(x^ny-1)]$, let $\delta :B_n\to\N\cup\{-\infty \}$ be a non-negative degree function, and let $B_n=\bigcup_{i\in\N}\mathcal{F}_i$ be the filtration induced by $\delta$. 
Given $\lambda ,\mu\in k$ and $f\in B_n\setminus k$, since $k^*\subset\mathcal{F}_0$, we have:
\[
0\le \delta (f-\lambda )=\delta ((f-\mu)+(\mu -\lambda))\le\max\{ \delta (f-\mu), \delta (\lambda -\mu)\}=\delta (f-\mu )
\]
Therefore, $\delta (f-\lambda)=\delta (f-\mu )$ for all $f\in B\setminus k$ and $\lambda ,\mu\in k$. It follows that
\[
0\le \delta \left(\frac{x-1}{x^ny-1}\right) =\delta (x-1)-\delta (x^ny-1) = \delta (x)-\delta (x^ny)=\delta (x)-n\delta (x)-\delta (y)
\]
which gives:
\[
0\le (n-1)\delta (x)\le -\delta (y) \implies (n-1)\delta (x)=\delta (y)=0
\]
If $n\ge 2$, we see that $\delta (x)=0$. 
If $n=1$, there is an automorphism $\alpha$ of $B_1$ such that $\alpha (y)=x$. Therefore, $\delta (x)=0$ if $n=1$. 

So in all cases, $\delta (x)=\delta (y)=0$, meaning that $k[x,y]\subset\mathcal{F}_0$. By {\it Proposition\,\ref{degree}(c)}, $\mathcal{F}_0$ is factorially closed, and thus algebraically closed, in $B_n$. Therefore, the algebraic closure of $k[x,y]$ in $B_n$ is contained in $\mathcal{F}_0$, which implies $B_n\subset\mathcal{F}_0$, i.e., $B_n=\mathcal{F}_0$.
\end{proof}

\begin{corollary}\label{d-variables} Given integers $m,n,d\ge 1$, if $B_m^{[d]}\cong_k B_n^{[d]}$, then $m=n$. 
\end{corollary}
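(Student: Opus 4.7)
The plan is to combine degree-rigidity of $B_n$ with the invariance property of $\Delta^0$ under cylinders, and then reduce to Theorem\,\ref{main}.

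First I would observe that $\Delta^0$ is a $k$-algebra isomorphism invariant: since a non-negative degree function on $R$ is transported to a non-negative degree function on any isomorphic copy of $R$, any $k$-algebra isomorphism $\varphi:B_m^{[d]}\to B_n^{[d]}$ restricts to a $k$-algebra isomorphism $\Delta^0(B_m^{[d]})\to \Delta^0(B_n^{[d]})$.

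Next I would identify both degree-neutral invariants. By \emph{Theorem\,\ref{non-neg}}, the ring $B_n$ is degree-rigid for every $n\ge 1$, and the same holds for $B_m$. Therefore \emph{Lemma\,\ref{deg-rig}(a)} applies and gives
\[
\Delta^0(B_m^{[d]})=B_m\qquad\text{and}\qquad \Delta^0(B_n^{[d]})=B_n.
\]
Combining this with the invariance observation above, the hypothesis $B_m^{[d]}\cong_k B_n^{[d]}$ yields $B_m\cong_k B_n$ as $k$-algebras. Finally, \emph{Theorem\,\ref{main}} forces $m=n$, which completes the argument.

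There is no real obstacle here: the substantive content has already been absorbed into \emph{Theorem\,\ref{non-neg}} (degree-rigidity), \emph{Lemma\,\ref{deg-rig}(a)} (which turns degree-rigidity into cancellation of polynomial cylinders at the level of $\Delta^0$), and \emph{Theorem\,\ref{main}} (distinguishing the $B_n$'s in dimension two). The only thing to verify carefully is that $\Delta^0$ is functorial under $k$-algebra isomorphisms, which is immediate from its definition as an intersection over \emph{all} non-negative degree functions.
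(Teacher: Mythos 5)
Your proof is correct and follows essentially the same route as the paper: degree-rigidity of $B_n$ (\emph{Theorem\,\ref{non-neg}}), \emph{Lemma\,\ref{deg-rig}}, and then \emph{Theorem\,\ref{main}}. The only cosmetic difference is that you invoke part (a) of \emph{Lemma\,\ref{deg-rig}} together with the isomorphism-invariance of $\Delta^0$, whereas the paper cites part (b) (strong invariance); these amount to the same argument, since part (b) is itself deduced from part (a) by exactly the observation you make.
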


\begin{proof} By {\it Theorem\,\ref{non-neg}}, $B_m$ is degree-rigid. Therefore, by {\it Lemma\,\ref{deg-rig}}, $B_m$ is strongly invariant. 
Since $B_m^{[d]}\cong_k B_n^{[d]}$, we have $B_m\cong_kB_n$. 
By {\it Theorem\,\ref{main}}, it follows that $m=n$.
\end{proof}

\subsection{Coordinate Rings for Two Surfaces of Gurjar and Miyanishi} 
Let $k$ be a field. We consider the following two $k$-algebras $C_1, C_2$: 
\[
C_1=k[x,y,u,v]/(ux-(y-1),vy-(x-1))
\]
and
\[
C_2=k[X,Y,U,V]/(U(XY-1)-(X-1),V(XY-1)-(Y-1))
\]
Observe that, for the surfaces $S$ and $S'$ defined in the {\it Introduction}, we have $S={\rm Spec}(C_1)$ and $S'={\rm Spec}(C_2)$ when $k=\C$. 

\begin{theorem}\label{Takanori} For any field $k$, the rings $C_1$, $C_2$ and $B_1$ are isomorphic as $k$-algebras.
\end{theorem}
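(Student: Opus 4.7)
The plan is to split the theorem into the two non-trivial isomorphisms $B_1 \cong C_1$ and $B_1 \cong C_2$; the remaining $C_1 \cong C_2$ then follows by transitivity. For each I would write down explicit $k$-algebra homomorphisms in both directions and verify that they are mutual inverses.

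For $B_1 \cong C_1$, I would first use the relation $ux=y-1$ in $C_1$ to eliminate $y$, obtaining the presentation $C_1 \cong k[x,u,v]/\bigl(v(1+ux)-(x-1)\bigr)$. The substitution $x\mapsto x$, $y\mapsto -u$, $u\mapsto -v$ then turns the defining relation $u(xy-1)=x-1$ of $B_1$ into $(-v)(-xu-1)=x-1$, i.e.\ $v(xu+1)=x-1$, matching the reduced presentation exactly. This defines a well-defined and surjective $k$-algebra map $B_1\to C_1$; injectivity is forced because both rings are two-dimensional integral domains (each produced from $k^{[2]}$ by an affine modification satisfying Samuel's Criterion), so a surjection between them must be an isomorphism. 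This step is essentially bookkeeping.

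For $B_1 \cong C_2$, define $\psi: B_1\to C_2$ by $x,y,u\mapsto X,Y,U$; well-definedness is immediate since the defining relation of $B_1$ is carried to the first relation of $C_2$. In the reverse direction, one computes inside $B_1$
\[
(1-uy)(xy-1) \;=\; (xy-1) - y\cdot u(xy-1) \;=\; (xy-1) - y(x-1) \;=\; y-1,
\]
so that the element $w:=1-uy\in B_1$ plays the role of $V$. This gives a well-defined $k$-algebra map $\phi: C_2\to B_1$ by $X,Y,U,V\mapsto x,y,u,w$. Clearly $\phi\circ\psi=\mathrm{id}_{B_1}$, so $\psi$ is injective and $\phi$ is surjective; the remaining task is to establish $\psi\circ\phi=\mathrm{id}_{C_2}$, which amounts to proving $V = 1-UY$ inside $C_2$.

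I expect this last identity to be the principal obstacle. A direct polynomial manipulation produces only the weaker relation
\[
(XY-1)(V+UY-1) \;=\; Y\bigl(U(XY-1)-(X-1)\bigr)+\bigl(V(XY-1)-(Y-1)\bigr),
\]
which lies in the defining ideal of $C_2$, so that $(XY-1)(V+UY-1)=0$ in $C_2$. To cancel the factor $XY-1$ I would want $XY-1$ to be a non-zero-divisor in $C_2$, which would follow if $C_2$ is an integral domain. The natural attempt is to present $C_2$ as two successive affine modifications of $k[X,Y]$ — first producing $B_1$, then adjoining $V$ with $V(XY-1)=Y-1$ — and apply Samuel's Criterion (Proposition~\ref{Samuel-lemma}) at each step. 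The first step is immediate, but in the second step the elements $XY-1$ and $Y-1$ are \emph{not} coprime in $B_1$: indeed $Y-1 = (1-UY)(XY-1)$, so Samuel's criterion does not apply as written. Overcoming this, either via an explicit polynomial identity exhibiting $V+UY-1$ as a combination of the two defining relations, or via a domain/dimension argument tailored to this non-coprime situation, is where I expect the real work to lie.
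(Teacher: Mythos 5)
Your first step, $B_1\cong C_1$, is complete and correct: since $ux=y-1$ is linear in $y$ with unit coefficient, the elimination $y=ux+1$ gives $C_1\cong k[x,u,v]/(v(ux+1)-(x-1))$, and your substitution is an isomorphism of polynomial rings carrying one defining ideal onto the other, so the surjection-between-domains argument is not even needed. This is cleaner than the paper's route, which passes through an auxiliary ideal $J$. The trouble is the second step, and it sits exactly where you put it --- but it is worse than a hard step: the identity $V=1-UY$ is \emph{false} in the quotient ring $C_2$ as defined. Write $f=U(XY-1)-(X-1)$, $g=V(XY-1)-(Y-1)$. Your own relation says $(UY+V-1)(XY-1)=Yf+g\in(f,g)$; but the evaluation at $(X,Y,U,V)=(1,1,0,0)$ kills $f$ and $g$ while sending $UY+V-1$ to $-1$, and the evaluation at $(0,0,1,1)$ kills $f$ and $g$ while sending $XY-1$ to $-1$. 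So $(f,g)$ contains a product of two elements neither of which it contains: $C_2$ is not a domain, $XY-1$ genuinely is a zero divisor in it, and $C_2\not\cong B_1$. Geometrically, ${\rm Spec}(C_2)$ contains the entire $2$-plane $\{X=Y=1\}$ as a second irreducible component (on the closure of the graph of $\bigl(\frac{X-1}{XY-1},\frac{Y-1}{XY-1}\bigr)$ one has $UY+V=1$). Your observation that Samuel's criterion fails because $XY-1$ and $Y-1$ are not coprime in $B_1$ is precisely the symptom of this, and no argument can produce $\psi\circ\phi=\mathrm{id}_{C_2}$.

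The theorem is therefore true only when $C_2$ is read as the affine modification $k[X,Y,\frac{X-1}{XY-1},\frac{Y-1}{XY-1}]\subset k[X,Y]_{XY-1}$, i.e.\ the coordinate ring of the main component --- which is what ${\rm Spec}(C_2)=S'$ requires anyway. With that reading your computation closes the argument in one line: $(1-UY)(XY-1)=Y-1$ in $k[X,Y]_{XY-1}$ gives $\frac{Y-1}{XY-1}=1-Y\cdot\frac{X-1}{XY-1}$, so the modification equals $k[X,Y,\frac{X-1}{XY-1}]=B_1$; this is exactly the paper's closing line. You should also know that the paper's published argument trips over the same stone: to prove $I\subset J$ it derives $(uv-1)(1+ux-y)=0$ in $B/J$ and cancels $uv-1$, but evaluation at $(x,y,u,v)=(0,0,-1,-1)$ kills both generators of $J$ while sending $1+ux-y$ to $1$, so $1+ux-y\notin J$, the cancellation is invalid, and in fact $I\ne J$. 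In short, you have not merely missed a trick; you have located a genuine defect in the presentation of $C_2$ (shared by the paper's proof), and the repair is to use the subring/affine-modification model of $C_2$ throughout, where your maps $\psi$ and $\phi$ are visibly mutually inverse.
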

\begin{proof}
Let $B=k[x,y,u,v]\cong k^{[4]}$ and define ideals:
\[
I=(ux-(y-1),vy-(x-1)) \,\, ,\,\, J=(x(uv-1)+(v+1), y(uv-1)+(u+1))
\]
First of all, we show $I=J$, that is, $C_1=B/J$. 

In $B/I$, we have: 
\begin{eqnarray*}
x(uv-1)+(v+1) &=& v(ux+1)-(x-1) \\
&=& vy-(x-1) \\
&=& 0
\end{eqnarray*}
Similarly we have that $y(uv-1)+(u+1)=0$ in $B/I$. Therefore $J\subset I$. 

On the other hand, in $B/J$, we have: 
\begin{eqnarray*}
(uv-1)(1+ux) &=& uv-1+u(x(uv-1)) \\
&=& uv-1-u(v+1) \\
&=& -(u+1)\\
&=& y(uv-1)
\end{eqnarray*}
Hence $1+ux=y$ in $B/J$. Similarly we have that $1+vy=x$ in $B/J$. Therefore, in $B/J$,  we have 
\begin{eqnarray*}
ux-(y-1) &=& u(1+vy)-(y-1) \\
&=& y(uv-1)+(u+1) \\
&=& 0
\end{eqnarray*}
and $vy-(x-1)=0$, which implies $I\subset J$. Hence $I=J$. 

Define the $k$-isomorphism $\varphi:B\to k[X,Y,U,V]\cong k^{[4]}$ by $\varphi (x,y,u,v)=(U,V,-Y,-X)$. Then: 
\[
\varphi((x(uv-1)+(v+1), y(uv-1)+(u+1))=(U(XY-1)-(X-1), V(XY-1)-(Y-1))
\]
Therefore we have $C_1\cong_k\varphi(B/J)=C_2$. 

Finally, in $C_2$ we have 
\[
\frac{Y-1}{XY-1}=1-Y\frac{X-1}{XY-1}
\]
which shows $C_2=k[X,Y,\frac{X-1}{XY-1}]\cong B_1$.
\end{proof}


\section{Isomorphism Classes in $\mathscr{S}_0$} Assume that $k$ is algebraically closed. Given $n\ge 1$, let $V_n={\rm Spec}(B_n)$. 
As observed above, $B_n=A[b/a]$ satisfies the hypotheses of {\it Theorem\,\ref{lemma1}} for $A=k[x,y]$ and elements $a=x^ny-1$ and $b=x-1$. 
Therefore, $V_n\in\mathscr{S}$ for each $n\ge 1$. 

Let $\Gamma_n\subset\A^2_k$ be the curve defined by the ideal $aA$. Then $\Gamma_n$ has two places at infinity. By {\it Theorem\,\ref{Kodaira}(b)}, $\bar{\kappa}(V_n)\ge 0$.

Consider the localization of the ring $A_a$ at $x$:
\[
A_a[x^{-1}]=k[x,x^{-1},y,(x^ny-1)^{-1}]=k[x,x^{-1},x^ny-1,(x^ny-1)^{-1}]\cong k^{[\pm 2]}
\]
Therefore, $\A^2_k\setminus\Gamma_n$ contains an open subset isomorphic to $\A^1_*\times\A^1_*$. By {\it Theorem\,\ref{Kodaira}(c)}, $\bar{\kappa}(V_n)\le 0$. 

It follows that $\bar{\kappa}(V_n)=0$. Moreover, {\it Theorem\,\ref{main}} shows that $V_m\cong V_n$ implies $m=n$, and {\it Corollary\,\ref{d-variables}} shows that $V_m\times\A^d_k\cong V_n\times\A^d_k$ for some $d\ge 0$ implies $m=n$.
These results are summarized below. 
\begin{theorem}\label{summary} For the surfaces $V_n$, $n\ge 1$, the following properties hold. 
\begin{itemize}
\item [{\bf (a)}] $V_n\in\mathscr{S}_0$ for each $n\ge 1$
\item [{\bf (b)}] Given positive integers $m,n$, if $V_m\times\A^d_k\cong_kV_n\times\A^d_k$ for some $d\in\N$, then $m=n$.
\end{itemize}
\end{theorem}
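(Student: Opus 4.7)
The plan is to assemble Theorem~\ref{summary} directly from the machinery already established, applying it to $a = x^ny - 1$ and $b = x - 1$ in $A = k[x,y]$.

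First, for part~(a), I would verify the three hypotheses of Theorem~\ref{lemma1}. The element $a = x^ny-1$ is irreducible in $A$ with $A/aA \cong k[x, x^{-1}] = k^{[\pm 1]}$ (solving $y = x^{-n}$), so $\Gamma_n \cong \A^1_*$; the element $b = x-1$ gives $A/bA \cong k[y] = k^{[1]}$; and $aA + bA$ is the maximal ideal at $(1,1)$, so $A/(aA+bA) \cong k$. Hence $V_n = \mathrm{Spec}(B_n) \in \mathscr{S}$.

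Next I need to pin down $\bar\kappa(V_n) = 0$. For the lower bound I invoke Theorem~\ref{Kodaira}(b): I must check that $\Gamma_n$ has two places at infinity. Homogenizing $x^ny = 1$ in $\PP^2$ gives $x^ny = z^{n+1}$, which meets the line at infinity $z=0$ only at the two points $[1:0:0]$ and $[0:1:0]$; a local analysis (or use of the fact that the normalization is $\A^1_*$, which has exactly two ends) confirms one place at each. This forces $\bar\kappa(V_n) \geq 0$. For the upper bound I invoke Theorem~\ref{Kodaira}(c): the localization
\[
A_a[x^{-1}] = k[x, x^{-1}, y, (x^ny-1)^{-1}] = k[x, x^{-1}, x^ny-1, (x^ny-1)^{-1}] \cong k^{[\pm 2]}
\]
exhibits an open subset of $\A^2_k \setminus \Gamma_n$ isomorphic to $\A^1_* \times \A^1_*$, so $\bar\kappa(V_n) \leq 0$. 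Combining the two bounds gives $\bar\kappa(V_n) = 0$, i.e.\ $V_n \in \mathscr{S}_0$.

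For part~(b), I would appeal directly to Corollary~\ref{d-variables}: if $V_m \times \A^d_k \cong_k V_n \times \A^d_k$, then passing to coordinate rings yields $B_m^{[d]} \cong_k B_n^{[d]}$, whence $m = n$.

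The only step that is not essentially mechanical is confirming the two places at infinity of $\Gamma_n$; everything else is a direct citation of Theorems~\ref{lemma1}, \ref{Kodaira}, and Corollary~\ref{d-variables}. The genuinely hard content—the strong invariance and the nontrivial isomorphism classification—has already been carried out in the earlier sections, so the present theorem is, by design, a summary assembly rather than a site of new difficulty.
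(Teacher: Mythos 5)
Your proposal is correct and follows essentially the same route as the paper: verify the hypotheses of Theorem~\ref{lemma1}, use Theorem~\ref{Kodaira}(b) via the two places at infinity of $\Gamma_n$ and Theorem~\ref{Kodaira}(c) via the torus $A_a[x^{-1}]\cong k^{[\pm 2]}$ to pin down $\bar{\kappa}(V_n)=0$, and cite Corollary~\ref{d-variables} for part (b). The only difference is that you spell out details the paper leaves implicit (the quotients $A/aA$, $A/bA$, $A/(aA+bA)$ and the computation of the points at infinity of $x^ny=z^{n+1}$), and these checks are accurate.
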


\noindent {\bf Question.} Over the field $k=\C$, are the surfaces $V_n$, $n\ge 1$, pairwise analytically isomorphic (respectively, pairwise homeomorphic)? 


\bigskip

\noindent \address{Department of Mathematics\\
Western Michigan University\\
Kalamazoo, Michigan 49008} USA\\
\email{gene.freudenburg@wmich.edu}
\bigskip

\noindent\address{Department of Mathematics\\
Faculty of Science\\
Niigata University\\
8050 Ikarashininocho, Niigata 950-2181, Japan \\
\email{kojima@math.sc.niigata-u.ac.jp} 
\bigskip

\noindent\address{Graduate School of Science and Technology\\
Niigata University\\
8050 Ikarashininocho, Niigata 950-2181, Japan \\
\email{t.nagamine14@m.sc.niigata-u.ac.jp}
\end{document}